\newcommand{\msf}[1]{\mathsf {#1}}
\newcommand{\mcal}[1]{{\mathcal {#1}}} 
\newtheorem{theorem}{Theorem}  
\newtheorem{lemma}[theorem]{Lemma}
\newtheorem{proposition}[theorem]{Proposition}
\theoremstyle{definition}
\theoremstyle{definition}
\theoremstyle{definition}
\theoremstyle{definition}
\theoremstyle{definition}\newtheorem{example}[theorem]{Example}
\theoremstyle{definition}
\theoremstyle{definition}
\theoremstyle{definition}
\theoremstyle{plain}\newtheorem*{theorem*}{Theorem}
\theoremstyle{plain}\newtheorem*{corollary*}{Corollary}
\theoremstyle{remark}\newtheorem*{remark*}{Remark}
\theoremstyle{remark}\newtheorem*{remarks*}{Remarks}
\theoremstyle{definition}\newtheorem*{conjecture*}{Conjecture}
\theoremstyle{definition}\newtheorem*{definition*}{Definition}
\theoremstyle{definition}\newtheorem*{example*}{Example}
\theoremstyle{definition}\newtheorem*{question*}{Question}
\theoremstyle{definition}\newtheorem*{questions*}{Questions}
\theoremstyle{definition}\newtheorem*{hypothesis*}{Hypothesis}
\def\qed{\ifhmode\unskip\nobreak\fi\ifmmode\ifinner\else\hskip5pt\fi\fi
 \hfill\hbox{\hskip5pt\vrule width4pt height6pt depth1.5pt\hskip1pt}}
\newcommand{\ov}[1]{\ensuremath{\overline{#1}}}
\newcommand{\Int}{\ensuremath{\operatorname{\mathsf{Int}}}}
\newcommand{\dist}{\ensuremath{\operatorname{\mathsf{dist}}}}
\newcommand{\RR}{\ensuremath{{\mathbb R}}}     
\newcommand{\R}[1]{\ensuremath{{\mathbb R}^{#1}}} 
\newcommand{\om}[1]{\ensuremath{\omega(#1)}}
\newcommand{\emp}{\ensuremath{\emptyset}}
\def\d#1dt{\frac{d#1}{dt}}    
\newcommand{\Gam}{\ensuremath{\Gamma}}
\newcommand{\Fix}[1]{\ensuremath{\operatorname{\mathsf {Fix}}(#1)}}
\newcommand{\co}{\colon\thinspace} 
\newcommand{\p}{\ensuremath{\partial}}
\def\emp{\varnothing}
	\def\mylabel#1{\label{#1}} 
\newcommand{\Z}[1]{\ensuremath{\operatorname{{\msf  Z}} (#1)}}
\newcommand{\V}{\mcal V}
\newcommand{\A}{\mcal A}
\newcommand{\B}{\mcal B}
\newcommand{\Vom}[1]{\ensuremath{\operatorname{{\mcal V}^\omega} (#1)}}
\renewcommand{\dim} {\ensuremath{{\mathsf {dim}\,}}}
\renewcommand{\om}{\omega}
\def\emp{\varnothing}
\begin{document}

\title{\bf Zero sets of Abelian Lie algebras of vector fields} 
\author{{\bf Morris W. Hirsch}
  \\ Mathematics Department\\ University of Wisconsin at
  Madison\\ University of California at Berkeley}
\maketitle

\begin{abstract}

\noindent
Assume $M$ is a $3$-dimensional real manifold without
boundary, $\A$ is an abelian Lie algebra of analytic vector fields on
$M$, and $X\in\A$.  The following result is proved:

If $K$ is a locally maximal compact set of zeroes of $X$ and
 the Poincar\'e-Hopf index of $X$ at $K$ is nonzero, there is a
point in $K$ at which all the elements of $\A$ vanish.
\end{abstract}

\tableofcontents

\section{Introduction}   \mylabel{sec:intro}
Throughout this paper $M$ denotes a real analytic, metrizable manifold
that is connected and has finite dimension $n$, fixed at $n=3$ in the main results.  

The space of (continuous) vector fields on $M$ endowed with the
compact open topology is $\V (M)$, and $\V^r {M}$ is the subspace of
$C^r$ vector fields.  Here $r$ denotes a positive integer, $\infty$,
or $\om$ (meaning analytic); this convention is abbreviated by $1\le
r\le \om$.

The {\em zero set} of $X\in \V (M)$ is $\Z X:=\{p\in M\co X_p=0\}$.
If $\Z X =\emp$ (the empty set), $X$ is {\em nonsingular}. 
The zero set of a subset $\mcal S\subset \V (M)$ is $\Z {\mcal S}:=\bigcap_{X\in S}\Z
S$. 

A compact set $K\subset \Z X$ is a {\em block of zeros} for $X$--- called
an
{\em $X$-block} for short--- if it lies in a precompact open set
$U\subset M$ whose closure $\ov U$ contains no other zeros of $X$;
such an open set is {\em isolating} for $X$, and for
$(X, K)$.

When $U$ is isolating for $X$ there is a unique maximal open
neighborhood $\mcal N_U \subset \V (M)$ of $X$ with the following
property ({\sc Hirsch} \cite{Hirsch2015a}): 
\begin{quote} \em
If $Y\in \mcal N_U$ has only finitely many zeros in $U$,
 the Poincar\'e-Hopf index of $Y|U$ depends only on $X$ and $K$. 
\end{quote} 
This index is an integer denoted by $\msf i_K (X)$, and also by $\msf
i (X, U)$, with the latter notation implying that $U$ is isolating for
$X$.\footnote{
The index can be equivalently defined as the intersection number of $X
(U)$ with the zero section of the tangent bundle of $U$ (\cite
{Bonatti92}); and as the the fixed-point index of the time-$t$ map of
the local flow of $X|U$ for sufficiently small $t> 0$.
(\cite{Dold72, Hirsch2014, Hirsch2015c}.)}

The celebrated {\sc Poincar\'{e}-Hopf} Theorem \cite{Hopf25,
  Poincare85} connects the index to the Euler characteristic
$\chi(M)$.  A modern formulation (see {\sc Milnor} \cite
{Milnor65})  runs as follows:
\begin{theorem*}[{\sc Poincar\'{e}-Hopf}]          \mylabel{th:PH}
Assume $M$ is a compact $n$-manifold,$ X\in \V (M)$, and $\Z X\cap \p
M=\emp$.  If $X$ is tangent to $\p M$ at all boundary points, or
points outward at all boundary points then $i (X, M)=\chi (M)$.  If
$X$ points inward at all boundary points, $i (X, M)= (-1)^{n-1}\chi
(M)$.
\end{theorem*}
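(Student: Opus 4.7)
The plan is to derive the three boundary cases from the closed-manifold case, and to prove the closed case by a Morse-theoretic computation.

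For closed $M$, I would first invoke the invariance of the index $i(X, U)$ under perturbations quoted just before the theorem to reduce to the case that $X$ has only isolated nondegenerate zeros. The local index at each such zero $p$ then equals $\sgn\det(DX_p)$. Specializing to $X=\nabla f$ for a Morse function $f\co M\to\RR$, the Hessian at a critical point of Morse index $k$ has exactly $k$ negative eigenvalues, giving local index $(-1)^k$. Summing over critical points yields $\sum_k (-1)^k c_k$, where $c_k$ counts Morse critical points of index $k$; this equals $\chi(M)$ by the standard Morse-theoretic description of the Euler characteristic (equivalently by the Euler-class / intersection-number description in the footnote). Invariance of the total index then gives $i(X, M)=\chi(M)$ for every nonsingular $X$ on closed $M$.

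For the tangent case, I would reflect $X$ across $\partial M$: since $X$ is tangent to and nonvanishing on $\partial M$, reversing the normal component and keeping the tangential component extends $X$ continuously to a vector field $\w X$ on the double $DM=M\cup_{\partial M}M$, with $\Z{\w X}$ equal to two disjoint copies of $\Z X$ of equal local index. Combining the closed case applied to $\w X$ with the Mayer--Vietoris identity $\chi(DM)=2\chi(M)-\chi(\partial M)$ and the classical fact that $\partial M$, being a closed manifold that admits a nonvanishing vector field, has $\chi(\partial M)=0$, gives $2\,i(X, M)=2\chi(M)$. For the outward case, I would attach an outer collar $\partial M\times[0,1]$ and cap it to form a closed manifold $M'$, extending $X$ across the cap so that the extension has one additional zero whose local index can be computed explicitly; applying the closed case to $M'$ and subtracting the new index recovers $i(X, M)=\chi(M)$. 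The inward case follows by applying the outward case to $-X$, using the relation $i(-X, p)=(-1)^n\,i(X, p)$ at each interior zero together with the fact that $-X$ points outward when $X$ points inward.

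The main obstacle is the boundary bookkeeping: designing the cap so that it introduces exactly one zero of explicitly identifiable index (related to $\chi(\partial M)$ via Poincar\'e--Hopf on $\partial M$ itself), and correctly tracking the parity of $n$ in passing from outward to inward. Once these details are in place the three boundary cases reduce uniformly to the closed case established by the Morse-theoretic argument.
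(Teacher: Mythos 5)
First, a point of comparison: the paper does not prove this statement at all --- it is quoted as the classical Poincar\'e--Hopf theorem with citations to Hopf, Poincar\'e and Milnor --- so there is no in-paper argument to measure yours against. Judged on its own, your outline of the closed case (homotope to a Morse gradient, local index $(-1)^k$ at an index-$k$ critical point, invariance of the total index) and of the tangent case (pass to the double $DM$, reflect $X$, use $\chi(DM)=2\chi(M)-\chi(\partial M)$ and $\chi(\partial M)=0$ coming from the nonvanishing tangent field $X|\partial M$) are the standard arguments and are sound in outline. One point to make explicit in the closed case: the perturbation-invariance quoted just before the theorem is only local ($Y$ in a neighborhood of $X$), so to compare an arbitrary $X$ with $\nabla f$ you need local constancy of $\msf i(\,\cdot\,,M)$ together with connectedness (indeed convexity) of $\V (M)$, or else the intersection-number description of the index from the footnote.

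The outward-pointing case, however, contains a genuine gap, which you have in effect flagged yourself as ``the main obstacle.'' You cannot ``cap'' the outer boundary $\partial M\times\{1\}$ to form a closed manifold: the cone on $\partial M$ is not a manifold unless $\partial M$ is a sphere, and gluing on any other filling changes $\chi$ by an amount you would then have to compute. Any honest version of this step must extend $X$ over an attached region so that it becomes tangent (or matches a reflected copy), and the zeros forced to appear in that region carry total index governed by $\chi(\partial M)$; controlling them is exactly Morse's ``law of vector fields'' $\msf i (X,M)=\chi(M)-\chi(\partial_-X)$, where $\partial_-X\subset\partial M$ is the part of the boundary where $X$ points inward (Morse, Pugh, Gottlieb --- all cited in the paper). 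That identity is the actual content of the boundary cases, not a bookkeeping detail, so deferring it leaves the proof incomplete. Finally, your (correct) reduction of the inward case via $-X$ and $\msf i_p(-X)=(-1)^n\,\msf i_p(X)$ yields $(-1)^{n}\chi(M)$, not the $(-1)^{n-1}\chi(M)$ printed in the statement; the example $M=[0,1]$ (an inward field has a single zero of index $-1$, while $\chi(M)=1$) confirms that $(-1)^n$ is the correct sign, so the discrepancy lies in the statement as printed --- but you should detect and resolve this parity rather than leave it ``to be tracked.''
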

\noindent 
For  calculations of the index in  more general settings see  
{\sc Gottlieb} \cite{Gottlieb86}, 
 {\sc Jubin}  \cite{Jubin09},
{\sc  Morse}  \cite{Morse29}, 
{\sc Pugh} \cite{Pugh68}. 

\medskip

 The
$X$-block $K$ is {\em essential} if $\msf i_K (X)\ne 0$.   When this
 holds  every $Y\in \msf N_U (X)$ 
 has an essential block of  zeros in $U$ (Theorem \ref{th:stability}).
 If $M$ is a
 closed manifold (compact, no boundary) and $\chi (M)\ne 0$, the
 Poincar\'e-Hopf Theorem implies  $\Z X$ is an essential
 $X$-block. 

C. Bonatti's proved a remarkable extension of the
Poincar\'{e}-Hopf Theorem to certain pairs of commuting analytic
vector fields on manifolds that need not be compact:
\begin{theorem*} [{\sc  Bonatti} \cite{Bonatti92}] \mylabel{th:bonatti}
Assume $\dim M\le 4$ and $\p M=\emp$.  If $X, Y\in \Vom M$ and $[X, Y]= 0$,
then $\Z Y$ meets every essential $X$-block.\footnote
{``The demonstration of this result involves a beautiful and quite
  difficult local study of the set of zeros of $X$, as an analytic
  $Y$-invariant set.'' \quad---{\sc P.\ Molino} \cite{Molino93}}
\end{theorem*}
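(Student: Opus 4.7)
The plan is to argue by contradiction: assume $K$ is an essential $X$-block with $\Z Y\cap K=\emp$, and derive the contradictory conclusion $\msf i_K(X)=0$. The starting observation is that $[X,Y]=0$ implies $(\phi^Y_t)_*X=X$, so the local flow of $Y$ carries $\Z X$ to itself. Fix an open isolating neighborhood $U\supset K$ with $\ov U\cap\Z X=K$; by continuity of $\phi^Y_t$ in $t$ and compactness of $K$, for all sufficiently small $|t|$ we have $\phi^Y_t(K)\subset U\cap\Z X=K$, so $K$ is itself invariant under the local flow of $Y$.

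Next, suppose $Y_p\ne 0$ for every $p\in K$. By compactness, choose an open neighborhood $W$ of $K$ with $\ov W\subset U$ on which $Y$ is nowhere zero. About each $p\in K$ the flow-box theorem supplies analytic coordinates $(x_1,\dots,x_{n-1},y)$ in which $Y=\partial/\partial y$; writing $X=\sum_i a_i(x,y)\,\partial/\partial x_i + b(x,y)\,\partial/\partial y$, the identity $[X,Y]=0$ reduces to $\partial a_i/\partial y=\partial b/\partial y=0$. So the coefficients of $X$ are independent of $y$ in these coordinates, and $\Z X$ is locally a product of a real-analytic subset of the $(n-1)$-dimensional $x$-slice with an interval of $y$-values.

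The decisive step is to convert this local cylindrical, $Y$-equivariant picture into the global vanishing $\msf i_K(X)=0$. The approach I would take is to exploit real analyticity: $\Z X\cap\ov W$ is a compact real-analytic subset of a relatively compact region, so it admits a finite real-analytic stratification by $Y$-invariant smooth submanifolds. On each positive-dimensional stratum $\Sigma$, $Y|_\Sigma$ is nowhere vanishing and generates a free local $\RR$-action; the cylindrical form of $X$ near $\Sigma$ then lets one write $X$ in a $Y$-equivariant normal form from which a degree-theoretic calculation shows that $\Sigma$ contributes $0$ to $\msf i_K(X)$. Summing over strata yields $\msf i_K(X)=0$, the desired contradiction. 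The hard part will be this stratification-and-normal-form argument --- precisely the ``beautiful and quite difficult local study of the set of zeros of $X$, as an analytic $Y$-invariant set'' that the excerpt alludes to. Both analyticity and the dimension restriction $n\le 4$ appear to enter essentially at this step, since the explicit normal forms and codimension counting that close the local index calculation are no longer available in higher dimensions.
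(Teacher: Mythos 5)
The paper contains no proof of this statement: it is quoted verbatim as Bonatti's theorem, with a citation to \cite{Bonatti92} and a footnote quoting Molino's review precisely to signal that the proof is long and difficult and is being used as a black box. So the only question is whether your blind attempt actually constitutes a proof. It does not. The first two paragraphs are correct and standard: $[X,Y]=0$ implies the local flow of $Y$ preserves $\Z X$ and hence $K$; and in flow-box coordinates for $Y$ (valid near $K$ under the contradiction hypothesis $\Z Y\cap K=\emp$) the coefficients of $X$ are independent of the flow coordinate, so $\Z X$ is locally a cylinder over an analytic set in a transversal slice. This is the universally shared setup. Everything after ``The decisive step'' is a placeholder for the entire content of the theorem.

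The concrete gap is the claim that each positive-dimensional stratum $\Sigma$ of a $Y$-invariant stratification of $\Z X\cap \ov W$ ``contributes $0$ to $\msf i_K(X)$'' and that these contributions sum to the index. The Poincar\'e--Hopf index of the block $K$ is a single integer attached to an \emph{isolated compact} set of zeros; it is additive over a decomposition of $K$ into disjoint compact blocks, but the strata of a stratification are neither compact nor isolated from one another (the closure of a stratum meets lower-dimensional strata), so there is no defined notion of a stratum's contribution without substantial additional construction --- and producing one is exactly the hard part of Bonatti's argument. Note also that the obvious shortcuts fail: one cannot kill the index by perturbing to $X+\eps Y$, since although this field is nonzero on $\Z X$ near $K$, it may acquire new zeros where $X=-\eps Y\ne 0$; and the local cylinder structure does not by itself control the global recurrence of the $Y$-flow on the compact invariant set $K$, which need not be a manifold. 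If the theorem followed from flow-boxes plus a generic stratification, it would hold for smooth fields in all dimensions, whereas both analyticity and $\dim M\le 4$ are known to be essential. Your sketch correctly identifies \emph{where} the difficulty lives but does not supply the argument; as a proof it is incomplete at its central step.
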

\noindent Related results  are in
the articles \cite{Bon-Sant15,  Hirsch2015a, Hirsch2015b, HW2000,
  Lima64, Lima65,  Plante91, Turiel03}. 

Our main result is an extension of Bonatti's Theorem:
\begin{theorem}      \mylabel{th:MAINnew}
Let $M$ be a connected $3$-manifold and $\A\subset \V^\om (M)$  an abelian
Lie algebra of analytic vector fields on $M$.  Assume  $X\in\A$ is
nontrivial and
$\Z X\cap \p M=\emp$.  If $K$ is
an essential $X$-block, then $\Z\A\cap K\ne\varnothing$.
\end{theorem}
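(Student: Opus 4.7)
The plan is to reduce to the case $\dim\A<\infty$ and then induct on $\dim\A$, using Bonatti's Theorem at each step to descend to a proper analytic subvariety of $M$.

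First, the family of sets
\[
\{\Z\B\cap K : \B\subset\A\ \text{is a finite-dimensional subalgebra with}\ X\in\B\}
\]
is downward-directed under inclusion of subalgebras, consists of compact subsets of $K$, and has intersection $\Z\A\cap K$. Compactness of $K$ (finite intersection property) therefore reduces the theorem to the case $\dim\A=d<\infty$, which I treat by induction on $d$. The base $d=1$ is trivial since then $\Z\A=\Z X\supset K$.

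For the inductive step $d\geq 2$, pick $Y\in\A$ linearly independent from $X$. Since $[X,Y]=0$, Bonatti's Theorem yields $\Z Y\cap K\neq\emptyset$. Being the zero set of a nonzero analytic vector field, $\Z Y$ is a proper analytic subset of $M$ of dimension at most $2$. Every $Z\in\A$ commutes with $Y$, so its local flow preserves $\Z Y$; this forces $Z$ to be tangent to the smooth locus of $\Z Y$, so $\A$ restricts to an abelian Lie algebra of analytic vector fields on each smooth stratum. I then would locate an irreducible analytic component $N\subset\Z Y$ meeting $K$ on which $N\cap K$ carries an essential block for $X|_N$ (or, if $X|_N\equiv 0$, for some nontrivial element of $\A|_N$, replacing $X$ by that element in the inductive hypothesis). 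Since $\dim N\leq 2$, the inductive step closes either via a lower-dimensional version of the theorem (with the two-dimensional case classical, e.g.\ Lima) or via a parallel induction in which both $\dim N$ and $\dim\A$ strictly decrease; since $\A|_N$ vanishes at $p\in N$ exactly when every $Z\in\A$ vanishes at $p$ as an element of $T_pM$, any common zero produced on $N$ lies in $\Z\A\cap K$, as required.

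The main obstacle is the \emph{essentiality descent}: converting the hypothesis $\msf i_K(X)\neq 0$ on $M$ into a nonzero index for the restriction of $X$ (or another element of $\A$) to some stratum of $\Z Y$. I expect this to follow from an additivity formula expressing $\msf i_K(X)$ as a sum of Poincar\'e--Hopf indices over the smooth strata of $\Z Y\cap K$, the singular-stratum contributions being controlled by a secondary induction on the dimension of the analytic subvariety. This is essentially the local analytic study at zeros of $X$ carried out by Bonatti, now extended so as to guarantee that essentiality persists through the iterated restriction to $\Z Y$, $\Z Y\cap\Z Z$, and so on. Once this technical heart is in place, the induction on $\dim\A$ closes and unwinds to give $\Z\A\cap K\neq\emptyset$.
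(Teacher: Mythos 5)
Your reduction to the finite-dimensional case via compactness and the finite intersection property is exactly the paper's Case II, and your base case is fine ($d\le 2$ being Bonatti's Theorem, since for $\A=\mathrm{span}(X,Y)$ one has $\Z\A\cap K=\Z Y\cap K$ as $K\subset\Z X$). But your inductive step has a genuine gap, and it is the one you yourself flag: the ``essentiality descent.'' Knowing $\Z Y\cap K\ne\emp$ gives you only a nonempty compact invariant subset of $\Z Y$; to iterate you need an \emph{essential} block for $X$ (or some element of $\A$) restricted to a stratum of $\Z Y$, and no such additivity formula over the strata is available off the shelf. The strata are non-compact and singular, $X$ may vanish identically on the relevant component of $\Z Y$ with no canonical replacement element whose restricted index is nonzero, and even defining a Poincar\'e--Hopf index for a vector field on a singular $2$-dimensional analytic space requires care. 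This is precisely the ``beautiful and quite difficult local study'' that constitutes the hard core of Bonatti's paper; asserting that it ``extends so as to guarantee that essentiality persists through the iterated restriction'' is a statement of the theorem's difficulty, not a proof. As written, the induction does not close.

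The paper's actual argument sidesteps index descent entirely. Assuming per contra $\Z\A\cap K=\emp$, it fixes an isolating neighborhood $U$ and uses the Stability Theorem to produce an open set $G_d(\msf N_U)$ in the Grassmannian of $d$-dimensional subalgebras $\B$, each of which satisfies $\Z\B\cap K\ne\emp$ by the induction hypothesis. The per contra assumption forces these sets to be pairwise disjoint (any two distinct $\B,\B'$ span $\A$), the Triangulation Theorem bounds how many of them can be $2$-dimensional, and a dimension count on the incidence variety $Q=\{(\B,p): p\in\Z\B\cap K\}$ --- which injects into the at-most-$2$-dimensional set $K$ yet has dimension $d+1\ge 3$ --- yields the contradiction. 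If you want to salvage your approach you would need to supply the index-localization machinery on singular analytic sets; otherwise the Grassmannian counting argument is the available route.
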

\noindent 
The proof, in  Section \ref{sec:proof}, relies heavily on Bonatti's Theorem. 
An analog for surfaces is proved in  
{\sc Hirsch} \cite[Thm.\ 1.3] {Hirsch2015a}.

\subsubsection{Application to attractors}   \mylabel{sec:attrac}
The {\em interior} $\Int (L)$ of a subset $L\subset M$ is the union of
all open subsets of $M$ contained in $L$.
 
Fix a metric on $M$.  If  $Q\subset M$ is closed, the minimum distance
from  $z\in M$ to  points of $Q$ is denoted by
$\msf {dist}(z,Q)$. 

Let $X\in \V^1 (M)$ have local flow $\Phi$.  An {\em attractor} for
$X$ (see \cite{Akin93, Conley78, Hale88, Smale67}) is a nonempty
compact set $P\subset M$ that is invariant under $\Phi$ and has a
compact neighborhood $N\subset M$ such that 
\[ \Phi_t (N)\subset  (N)\]
and 

\begin{equation}                \label{eq:limt}
 \lim_{t\to\infty} \dist (\Phi_t (x, P),P) = 0 \ \text {uniformly
   in} \, x\in N.  
\end{equation}
Such an  $N$ can be chosen so that 

\begin{equation}                \label{eq:phitn}
t >s \ge 0 \implies \Phi_t (N)\subset \Int (\Phi_s
(N)),
\end{equation}
which is assumed from now on. 
 In addition, using a result of {\sc F.\ W.\ Wilson}
\cite[Thm{.\ }2.2]{Wilson69} we choose $N$ so that:
\begin{equation}                \label{eq:N}
\mbox{\em $N$ is a compact $C^1$ submanifold and $X$ is inwardly
  transverse to $\p N$.}\footnote{This means $X_p$ is not tangent to
  $\p N$ if $p\in \p N$.}
\end{equation}

\begin{theorem}         \mylabel{th:attrac}
 Let $M$,  $\A$ and $X$ be as in {\em Theorem \ref{th:MAINnew}}.  If 
 $P\subset M$ is  a  compact attractor for  $X$ and   $\chi (P)\ne
 0$, then $\Z \A \cap P\ne\varnothing$.
\end{theorem}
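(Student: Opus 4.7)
\textbf{Proof proposal for Theorem \ref{th:attrac}.}

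The plan is to reduce to Theorem \ref{th:MAINnew}. I choose a compact neighborhood $N$ of $P$ as guaranteed by \eqref{eq:N}, produce an essential $X$-block $K \subset N$ whose index I can compute via Poincar\'e--Hopf, apply Theorem \ref{th:MAINnew} to find a point $p \in \Z\A \cap K$, and then use the attractor dynamics to conclude $p \in P$.

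For the setup, fix $N$ as in \eqref{eq:N}: a compact $C^1$ submanifold with $X$ inwardly transverse to $\partial N$. In particular $X$ has no zeros on $\partial N$, so the compact set $K := \Z X \cap N$ lies in $U := \Int(N)$, and $U$ is isolating for $X$. Thus $K$ is an $X$-block. Since $X$ points inward on $\partial N$ and $n = \dim M = 3$, the Poincar\'e--Hopf Theorem gives
\[
\msf{i}_K(X) = \msf{i}(X, U) = (-1)^{n-1}\chi(N) = \chi(N).
\]
To replace $\chi(N)$ by $\chi(P)$, note that by \eqref{eq:phitn} the sets $N_s := \Phi_s(N)$ form a nested decreasing family of compact neighborhoods of $P$, and by \eqref{eq:limt} these form a neighborhood basis of $P$. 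Each $\Phi_s \co N \to N_s$ is a diffeomorphism, so $\chi(N_s) = \chi(N)$ for every $s \ge 0$. By the continuity of \v{C}ech cohomology on nested intersections of compacta,
\[
\check{H}^*(P) = \varinjlim_{s \ge 0} \check{H}^*(N_s),
\]
so $\chi(P) = \chi(N)$. Hence $\msf{i}_K(X) = \chi(P) \ne 0$, i.e.\ $K$ is an essential $X$-block.

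Since $\Z X \cap \partial M = \emp$ and $X$ is nontrivial, Theorem \ref{th:MAINnew} applies and furnishes $p \in \Z\A \cap K$. It remains to show $p \in P$. Because $p \in K \subset \Z X$, the flow fixes $p$: $\Phi_t(p) = p$ for all $t \ge 0$. Because $p \in N$, \eqref{eq:limt} yields
\[
\dist(p, P) = \lim_{t \to \infty} \dist(\Phi_t(p), P) = 0,
\]
and since $P$ is closed we conclude $p \in P$, proving $\Z\A \cap P \ne \emp$. The only step that requires care is the identification $\chi(N) = \chi(P)$, which I expect to handle through the \v{C}ech continuity argument above (equivalently, one may invoke the standard fact that a trapping region of an attractor admits a deformation retraction onto the attractor); everything else is a routine translation of the hypotheses into the setting of Theorem \ref{th:MAINnew}.
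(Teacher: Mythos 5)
Your proposal is correct and follows essentially the same route as the paper: choose $N$ as in \eqref{eq:N}, use the Poincar\'e--Hopf Theorem for an inward-pointing field to produce an essential $X$-block in $\Int(N)$, identify $\chi(N)$ with $\chi(P)$ via the nested images $\Phi_s(N)$, and apply Theorem \ref{th:MAINnew}. The only cosmetic differences are that the paper applies Theorem \ref{th:MAINnew} to the restricted data on $N$ rather than on $M$ and cites ``standard homology theory'' where you spell out the \v{C}ech continuity argument.
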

\noindent
{\em Proof.}  $P$ is a proper subset of $M$--- otherwise $M$ is a
closed 3-manifold having nonzero Euler characteristic, an
impossibility (e.g.,\ {\sc Hirsch} \cite[Thm. 5.2.5]{Hirsch76}).  Fix
$N$ as above and note that $\chi (N)\ne 0$. 

 By (\ref{eq:N}) and Poincar\'{e}-Hopf Theorem there is an essential
 $X$-block $K\subset N\,\verb=\=\,\p N$, and $K\subset P$ by
 (\ref{eq:limt}).  Standard homology theory and (\ref{eq:phitn}) imply
 that the inclusion map $P\hookrightarrow N$ induces an isomorphisms on
 singular homology, hence $\chi (N)=\chi (P) \ne 0$.

The conclusion follows from Theorem \ref{th:MAINnew} applied to the
data $M', \A', X'$:
\[
M':=N, \quad
 \A':= \big\{Y|N\co Y\in \A\big\}, \quad X':=X|N. \qquad \qed
\]

\begin{example}         \mylabel{th:example}
Denote the  inner product of $x, y\in \R 3$ by $\langle x, y\rangle$
and the norm of $x$ by $\|x\|$.  Let $B_r\subset \R 3$ denote the open
ball about the origin of radius $r >0$. 

\begin{itemize}
\item {\em Assume    $\A$ is  an abelian Lie algebra of analytic vector fields on
an open set $M\subset \R 3$ that contains  $\ov B_r$. 
Let $X\in\A$ and  $r >0$ be such 
that 
\[
 \|x\|=r \implies \langle  X_p, p \rangle < 0.
\]
Then $\Z \A \cap B_r\ne\varnothing$.}
\end{itemize}
{\em Proof.} This is a consequence of Theorem \ref{th:attrac}:
 $\ov
B_r$ contains an attractor for $X$ because $X$ inwardly transverse to
$\p \ov B_r$ and $\chi (\ov B_r)=1$.
\end{example}

\section{Background material}   \mylabel{sec:back}
\begin{lemma}[\sc {Invariance}]           \mylabel{th:invariance}
If $T, S\in \A$ then $\Z S$ is invariant under $T$.
\end{lemma}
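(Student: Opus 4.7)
The plan is to reduce the statement to the classical fact that two commuting vector fields have commuting (local) flows, and then observe that the flow of one sends the zero set of the other into itself.

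Since $\A$ is abelian, $[T,S]=0$. Identifying the bracket with the Lie derivative gives $\mcal L_T S = 0$. A standard result (see e.g.\ any text on smooth manifolds) asserts that this vanishing is equivalent to the statement that the local flow $\Phi$ of $T$ preserves $S$ in the sense that
\[
(\Phi_t)_\ast S \;=\; S \qquad \text{wherever both sides are defined.}
\]
In coordinates this reads $S_{\Phi_t(p)} = (d\Phi_t)_p(S_p)$ for every $p$ in the domain of $\Phi_t$.

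First I would fix $p\in \Z S$ and let $J_p\subset \RR$ be the maximal open interval on which $\Phi_t(p)$ is defined. For $t\in J_p$ the identity above yields
\[
S_{\Phi_t(p)} \;=\; (d\Phi_t)_p(S_p) \;=\; (d\Phi_t)_p(0) \;=\; 0,
\]
so $\Phi_t(p)\in \Z S$. Hence every trajectory of $T$ through a point of $\Z S$ remains in $\Z S$, which is precisely the assertion that $\Z S$ is invariant under $T$.

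I expect no real obstacle here; the only mild subtlety is that $T$ need not be complete, so the flow $\Phi$ is only locally defined, but the invariance of $S$ under $(\Phi_t)_\ast$ holds on the open set where $\Phi_t$ is defined, which is exactly what is needed for the trajectory-wise argument above. Analyticity of the vector fields is not used in the argument; the lemma is purely kinematic, resting on the commutator identity for Lie derivatives of vector fields.
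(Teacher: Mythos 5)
Your argument is correct, and it reaches the conclusion by a slightly different (though closely related) mechanism than the paper. The paper works at the level of flows: it uses $[T,S]=0$ to get $\Phi_t\circ\Psi_s=\Psi_s\circ\Phi_t$ for the local flows $\Phi$ of $T$ and $\Psi$ of $S$, identifies $\Z S$ with $\Fix{\Psi}=\bigcap_s\Fix{\Psi_s}$, and then checks directly that $\Psi_s(\Phi_t(p))=\Phi_t(\Psi_s(p))=\Phi_t(p)$, so $\Phi_t(p)$ remains a fixed point of $\Psi_s$. You instead work at the level of the vector field, invoking the equivalent classical fact that $\mcal L_T S=0$ forces $(\Phi_t)_\ast S=S$ wherever defined, and then evaluating at a zero of $S$: $S_{\Phi_t(p)}=(d\Phi_t)_p(S_p)=0$. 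Both proofs rest on the same underlying theorem about commuting vector fields; yours has the minor advantage of working cleanly on the whole maximal interval $J_p$ and avoiding the identification of $\Z S$ with a fixed-point set, while the paper's version needs only the qualitative statement that the two local flows commute for small times, which is marginally weaker input. Your closing observation that analyticity plays no role here is accurate --- the lemma holds for $C^1$ commuting fields --- and consistent with how the paper uses it.
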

\begin{proof} Let  $\Phi:=\{\Phi_t\}_{t\in\RR}$ and  $\Psi:=\{\Psi_s\}$ denote
  the local flows of   $T$ and $S$, respectively.   If
  $t, s\in\RR$ are sufficiently close to $0$, because $[T, S]=0$ we have 
\[
 \Phi_t \circ \Psi_s = \Psi_s\circ  \Phi_t,
\]
and
\[
\Z S=\Fix \Psi:=\bigcap_{s}\Fix{\Psi_s},
\]
where $\msf{Fix}$ denotes the fixed point set.
Suppose  $p\in \Z S$.  Then $ p\in \Fix \Psi$, and
\[
\Psi_s\circ \Phi_t (p)= \Phi_t\circ\Psi_s (p)=\Phi_t (p).
\]
Consequently $\Phi_t (p) \in \Fix {\Psi_s}$ for sufficiently small $|t|, |s|$,
implying the conclusion.
\end{proof}

A closed set $Q\subset M$ is  an {\em analytic subspace of $M$},
or 
{\em analytic in $M$,} 
provided $Q$  has a locally finite covering by
zero sets of analytic maps defined on open subsets of $M$.
This is abbreviated to  {\em analytic space} when the ambient manifold $M$ is clear
from the context.  The connected components of analytic spaces are
also analytic spaces. 

Analytic spaces have very simple local topology,   owing to the
 theorem of {\sc {\L}ojasiewicz} \cite {Lo64}:
\begin{theorem} [{\sc Triangulation}]        \mylabel{th:triang}
If  $T$ is a locally finite collection of analytic spaces in $M$,
there is a triangulation of $M$ such that each  element of $T$
is covered by a  subcomplex. 
\end{theorem}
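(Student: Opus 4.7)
The plan is to follow {\L}ojasiewicz's original strategy: localize to finitely many analytic sets in $\R n$, refine them into a stratification by analytic submanifolds satisfying the frontier condition, triangulate the stratification inductively by dimension, and assemble the local triangulations by a countable exhaustion of $M$.

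First I would localize. Choose a countable, locally finite cover of $M$ by relatively compact analytic coordinate charts $U_i\cong \R n$. Because $T$ is locally finite, only finitely many members of $T$ meet each $\ov{U_i}$, so the task reduces to triangulating a compact region in $\R n$ compatibly with a \emph{finite} family of analytic subsets, and then gluing the local triangulations. The key structural input for the local problem is the existence of a \emph{semianalytic stratification}: any finite collection of analytic sets in an open $V\subset \R n$ admits a refinement into a finite partition by connected analytic submanifolds $\{S_\alpha\}$ with $\ov{S_\alpha}\sm S_\alpha$ a union of strata of strictly lower dimension. This follows inductively from the basic fact that the singular locus of an analytic set is itself an analytic set of strictly smaller dimension, together with {\L}ojasiewicz's inequality for controlling closures.

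The local triangulation is then constructed by induction on $k=0,1,\dots,n$. At dimension $0$, take the finite set of $0$-strata as vertices. Assuming the $(k-1)$$\,$-skeleton $\Sig_{k-1}:=\bigcup_{\dim S_\alpha<k}S_\alpha$ has been triangulated compatibly with the strata, the core geometric lemma asserts that each $k$-stratum $S$ can be triangulated in a way that extends the already-chosen triangulation of $\ov S \sm S \subset \Sig_{k-1}$. This is proved by choosing a generic linear projection $\pi\co \R n \to \R k$ under which $\ov{S}$ becomes a finite branched semianalytic covering over a compact polyhedron in $\R k$; pulling back a sufficiently fine subdivision of the image then gives the required simplicial decomposition of $\ov S$.

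The main obstacle I anticipate is precisely this inductive extension step: the projection direction must be simultaneously generic for every stratum of dimension $\ge k$ and compatible with every simplex already chosen in $\Sig_{k-1}$, and the pulled-back decomposition must be verified to be genuinely simplicial and semianalytic so that the induction can continue. Once the local triangulations are available, the global assembly is routine: triangulate $\ov{U_1}$ subordinate to $T\cap \ov{U_1}$, extend to $\ov{U_1}\cup\ov{U_2}$ by building a compatible triangulation on $\ov{U_2}$ that refines the old one on the overlap (barycentric subdivision plus general position), and iterate. Local finiteness of both $\{U_i\}$ and $T$ guarantees the procedure stabilizes on compact sets and produces a global triangulation of $M$ in which each $Q\in T$ is covered by a subcomplex.
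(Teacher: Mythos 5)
The paper gives no proof of this statement: it is quoted as a known background theorem with a citation to {\sc {\L}ojasiewicz} \cite{Lo64}, so there is no in-paper argument to compare against. Your outline --- localize to finitely many analytic sets per chart, refine to a stratification by analytic submanifolds satisfying the frontier condition, triangulate by induction on dimension using a generic linear projection that exhibits each stratum closure as a finite branched covering of a polyhedron in $\mathbb{R}^k$, then assemble over a locally finite exhaustion of $M$ --- is a faithful reconstruction of the strategy of that cited proof. The steps you flag as delicate (choosing one projection simultaneously generic for all strata of dimension $\ge k$, extending compatibly over the already-triangulated $(k-1)$-skeleton, and refining triangulations compatibly on chart overlaps) are precisely where the real content of {\L}ojasiewicz's paper lies; none of them is routine, but your sketch does not misstate the structure of the argument.
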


The proof of Theorem \ref{th:MAINnew} uses 
the following folk theorem:
\begin{theorem}[\sc Stability]\mylabel{th:stability}
Assume $X\in \V (M)$ 
and $U\subset M$ is 
isolating for $X$.  
\begin{description}

\item[(a)] 
If $\msf i (X, U)\ne 0$ then $\Z X\cap U\ne\varnothing$. 

\item[(b)] If   $Y\in \V (M)$ is sufficiently close to $X$,
then  $U$ is isolating for $Y$ and   $\msf i (Y,  U)=\msf i (X, U)$.

\end{description}
\end{theorem}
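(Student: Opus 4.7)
The plan is to prove (b) first and then deduce (a) by a compactness argument. Fix a continuous Riemannian structure on $M$, inducing a uniform norm $\|\cdot\|_{\ov U}$ on vector fields restricted to the compact set $\ov U$.

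For the isolating half of (b): since $U$ is isolating, $\Z X\cap\ov U$ is the compact block $K\subset U$, so $X$ is bounded below by some $c>0$ on the compact boundary $\p U$. Any $Y$ with $\|Y-X\|_{\ov U}<c/2$ then satisfies $\Z Y\cap\p U=\emp$, which forces $\Z Y\cap\ov U$ to be a compact subset of $U$, i.e., $U$ is isolating for $Y$. With $U$ now isolating for $Y$ as well, the characterization used to define $\msf i$ yields open neighborhoods $\mcal N_U(X)\ni X$ and $\mcal N_U(Y)\ni Y$ on which $\msf i(X,U)$ and $\msf i(Y,U)$ are detected by the ordinary Poincar\'e-Hopf index of any member with only finitely many zeros in $U$. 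Shrinking further so that $Y\in\mcal N_U(X)$, I will produce a common approximant $Y'\in\mcal N_U(X)\cap\mcal N_U(Y)$ with only finitely many zeros in $U$ by smoothing and a standard transversality perturbation. The Poincar\'e-Hopf index of $Y'|U$ then computes both $\msf i(X,U)$ and $\msf i(Y,U)$ simultaneously, so the two integers agree.

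For (a): assuming $\msf i(X,U)\ne 0$, pick a sequence $Y_n\in\mcal N_U(X)$ with $Y_n\to X$, each having only finitely many zeros in $U$. The Poincar\'e-Hopf index of $Y_n|U$ equals $\msf i(X,U)\ne 0$, so each $Y_n$ has at least one zero $p_n\in U$. Extract a convergent subsequence $p_n\to p\in\ov U$; from $(Y_n)_{p_n}=0$ together with uniform convergence $Y_n\to X$ on $\ov U$, one obtains $X_p=0$. The isolating hypothesis then forces $p$ into the block $K\subset U$, giving $\Z X\cap U\ne\emp$.

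The main delicacy is coordinating the two neighborhoods $\mcal N_U(X)$ and $\mcal N_U(Y)$ in (b) so that a single generic perturbation $Y'$ lies in both. This is routine — openness of both sets together with the density of smoothly approximable vector fields having only finitely many zeros in $U$ suffices — which is why the result is folkloric: no ingredient beyond the defining property of $\msf i$ and elementary transversality is needed.
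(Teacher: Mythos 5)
Your argument is correct, but there is nothing in the paper to compare it against: the paper's ``proof'' of Theorem \ref{th:stability} is only a citation to \cite[Thm.\ 3.9]{Hirsch2015a}, so you have supplied an actual argument where the text supplies none. What you wrote is the standard folklore proof, and it hangs together. The isolating half of (b) is right: $X$ is bounded below in norm on the compact frontier $\ov U\setminus U$, so any $Y$ uniformly close on $\ov U$ has $\Z Y\cap(\ov U\setminus U)=\emp$, making $\Z Y\cap \ov U$ a compact $Y$-block inside $U$. The index half correctly exploits the defining property of $\mcal N_U$: once $Y\in\mcal N_U(X)$, the set $\mcal N_U(X)\cap\mcal N_U(Y)$ is an open neighborhood of $Y$, and any member $Y'$ of it with finitely many zeros in $U$ has its Poincar\'e--Hopf index equal simultaneously to $\msf i(X,U)$ and $\msf i(Y,U)$. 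Part (a) then follows by the limit-of-zeros argument, using that the empty zero set would force index $0$. The one ingredient you use without proof is the density, in every neighborhood of a given field, of fields with only finitely many zeros in $U$ (smooth approximation plus transversality to the zero section over the compact $\ov U$); this is standard, and in any case the paper's definition of $\msf i_K(X)$ already presupposes that such representatives exist in $\mcal N_U$, so you are not assuming more than the statement itself requires. A cosmetic point: write the frontier as $\ov U\setminus U$ rather than $\p U$, since the paper reserves $\p$ for manifold boundary.
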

 \begin{proof} See
 {\sc Hirsch} \cite[Thm.\ 3.9] {Hirsch2015a}.
 \end{proof}

Let $\Z {\mcal S}$ denote the set
of  common zeros of a subset $\mcal
  S\subset\V^\om (M)$.  
\begin{proposition}             \mylabel{th:zs}
The following conditions hold for every $\mcal S \subset\A$:
\begin{description}

\item[(a)] $\Z {\mcal S}$  is analytic in $M$.

\item[(b)] Every zero dimensional $\A$-invariant set lies in $\Z \A$.

\end{description}
\end{proposition}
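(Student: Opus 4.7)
The plan is to handle the two parts separately, with part (a) resting on the local Noetherian property of the ring of real analytic function germs and part (b) on the elementary fact that a connected set containing a point and contained in a zero dimensional set must be that point alone.

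For part (a), I first observe that for a single $X\in\V^\om (M)$, the set $\Z X$ is analytic in $M$: pick any point $p\in M$ and analytic local coordinates in a neighborhood $U$ of $p$ in which $X=\sum_{i=1}^n a_i\,\pd{x_i}$ with each $a_i\in C^\om (U)$; then $\Z X\cap U=\{a_1=\dots=a_n=0\}$ is the zero set of the analytic map $(a_1,\dots,a_n)\co U\to\R n$. Covering $M$ by such neighborhoods gives $\Z X$ a locally finite covering of the required form. For an arbitrary $\mcal S\subset\A$, we have
\[
\Z{\mcal S}=\bigcap_{X\in \mcal S}\Z X,
\]
an intersection that may be infinite. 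The resolution is the classical fact that the ring $\mcal O_p$ of germs of real analytic functions at any $p\in M$ is Noetherian: the ideal generated by the germs at $p$ of all components of all $X\in\mcal S$ (in some coordinate chart about $p$) is finitely generated, so there exist $X_1,\dots,X_k\in\mcal S$ and a neighborhood $U_p$ of $p$ with $\Z{\mcal S}\cap U_p=\Z{X_1}\cap\dots\cap\Z{X_k}\cap U_p$. The latter is the zero set of a single analytic map on $U_p$, and covering $M$ by such $U_p$ shows that $\Z{\mcal S}$ is analytic.

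For part (b), let $N\subset M$ be a zero dimensional set invariant under the local flow of every element of $\A$. Fix $p\in N$ and $X\in\A$, and let $\Phi=\{\Phi_t\}$ be the local flow of $X$. By invariance, there is $\eps>0$ such that the orbit arc $\gamma=\{\Phi_t(p)\co |t|<\eps\}$ lies in $N$. Now $\gamma$ is the continuous image of the connected interval $(-\eps,\eps)$, hence connected; on the other hand the connected component of $p$ in the zero dimensional space $N$ is $\{p\}$, so $\gamma=\{p\}$. Differentiating at $t=0$ gives $X_p=0$. Since $X\in\A$ was arbitrary, $p\in\Z\A$, proving $N\subset\Z\A$.

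The main subtlety is the appeal to Noetherianity in (a), which is needed precisely because $\mcal S$ need not be finite; once this is granted, both parts are short. Part (b) requires only that one correctly parse ``$\A$-invariant'' as invariance under the local flows and use connectedness of orbit arcs, so it should pose no real obstacle.
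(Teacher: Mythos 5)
The paper gives no actual proof of this proposition (it is ``left to the reader''), so your argument can only be judged on its own terms. Part (b) is correct and is surely the intended argument: invariance puts an orbit arc through $p$ inside the set, connectedness of the arc together with zero-dimensionality collapses it to $\{p\}$, and differentiating the constant orbit at $t=0$ gives $X_p=0$ for every $X\in\A$.

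Part (a) has a genuine gap at the step ``the ideal of germs is finitely generated, so there exist $X_1,\dots,X_k\in\mcal S$ and a neighborhood $U_p$ of $p$ with $\Z {\mcal S}\cap U_p=\Z {X_1}\cap\dots\cap\Z{X_k}\cap U_p$.'' Noetherianity of $\mcal O_p$ gives, for each component $a$ of each $X\in\mcal S$, an identity $a=\sum_i h_i g_i$ (the $g_i$ being the components of $X_1,\dots,X_k$) valid on some neighborhood $V_X$ of $p$ that \emph{depends on $X$}; hence $a$ vanishes on $\Z{X_1}\cap\dots\cap\Z{X_k}\cap V_X$. But $\mcal S$ may be infinite, $\bigcap_{X\in\mcal S}V_X$ need not be a neighborhood of $p$, and so the asserted equality on a single $U_p$ does not follow from what you have proved. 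This is precisely the difficulty that makes ``an arbitrary intersection of real analytic sets is analytic'' a genuine theorem (Bruhat--Whitney) rather than a formal consequence of Noetherianity of the local ring; closing the gap requires either citing that result or an additional argument, e.g.\ decomposing the germ of $\Z{X_1}\cap\dots\cap\Z{X_k}$ at $p$ into finitely many irreducible components via complexification and applying the identity theorem on each to get a neighborhood independent of $X$. It is worth noting that in every application of (a) in this paper the family $\mcal S$ spans a finite-dimensional subspace $\B\subset\A$, in which case $\Z{\mcal S}=\Z{\B}=\Z{Y_1}\cap\dots\cap\Z{Y_d}$ for a basis $Y_1,\dots,Y_d$, and the first (correct) paragraph of your proof already suffices with no appeal to Noetherianity at all.
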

\begin{proof} 
  Left to the reader.
\end{proof}

\section{Proof of Theorem \ref{th:MAINnew} }   \mylabel{sec:proof}
Recall the hypotheses of the Main Theorem:
\begin{itemize}

\item {\em $M$ is a 3-dimensional manifold, }

\item {\em $\A\subset \V^\om (M)$ is an abelian Lie algebra,} 

\item {\em $X\in\A$ is nontrivial, \ $\Z X\cap \p M=\emp$, \ and
  $K$ is an essential block of zeroes for $X$.}

\end{itemize}
The conclusion to be proved is:  $\Z A \cap K\ne\varnothing$. 
It suffices to show that $\Z A$ meets every neighborhood of $K$,
because $\Z \A$ is closed and $K$ is compact.


\paragraph{Case I:}  $\dim \A =d < \infty$.  
The special case $d \le 2$ is covered by Bonatti's Theorem.  
We proceed by induction on $d$:

\medskip
\noindent 
{\bf Induction Hypothesis}          
\begin{itemize}
\item  $\dim \A =d+1,\, d \ge 2$.

\item {\em The zero set of every  $d$-dimensional
  subalgebra of $\A$   meets $K$.}
\end{itemize}
Arguing  by contradiction, we assume {\em per contra}: 
\begin{description}

\item[(PC)] $\Z \A\cap K=\emp$. 

\end{description}
An important consequence is: 
\begin{description}

\item[(A)] $\dim K \le 2$.
\end{description}
For otherwise $\dim K =3$, which entails the contradiction that   
$X$  is trivial:  $X$ is analytic and vanishes on
a 3-simplex in the connected 3-manifold $M$.

The Stability Theorem (\ref{th:stability}) implies $X$ has a
neighborood $\mcal N_U\subset \V^\om (X)$ with the following property:

\begin{description}

\item[(B)]  $ Y\in \mcal N_U \implies U$ {\em is isolating for $Y$
  and } $\msf i (Y, U) = \msf i (X, U)\ne 0$.
\end{description}

Let $G_d (\A)$ denote $d$-dimensional Grassmann manifold of
$d$-dimensional linear 
subspaces $\B$ of $\A$; these are  abelian subalgebras.

The nonempty set 
\[
G_d (\msf N_U) :=\{\B\in G_d (\A) \co \B \cap \msf N_U\ne\varnothing\}
\]
is   open in  $G_d (\A)$, hence it is a $d$-dimensional analytic manifold. 

Bonatti's Theorem and (B) imply:
\begin{description}

\item[(C)]  $\Z \B \cap K\ne\varnothing$ for all $\B \in G_d (\msf
  N_U)$.  

\end{description}
A key topological consequence of (C) is:
\begin{description}

\item[(D)] {\em If $\B$ and $\B'$ are distinct elements of $G_d (\msf
  N_U)$, then $\Z \B \cap K$ and $\Z {\B'} \cap K$ are disjoint.}
\end{description}
This holds because $\B \cup \B'$ spans $\A$, hence (PC) implies
\[\textstyle \big(\Z \B \cap K \big ) \bigcap \big (\Z {\B'} \cap
K\big) = \Z \A \cap K=\emp.
\]  

Each set $\Z \B \cap K$ is invariant (Lemma \ref{th:invariance}) and therefore 
has positive dimension by (PC).  Moreover:

\begin{description}

\item[(E)] {\em The set \ 
$\Gam_U:=\big\{\B\in G_d (\msf N_U)\co \dim \Z \B  \cap K=2 \big\}$ \
is finite.} 
\end{description} 
For otherwise (D) implies $K$ contains an infinite sequence of
pairwise disjoint compact subsets that are 2-dimensional and analytic
in $M$.  But this is impossible by (A) and the Triangulation Theorem
\ref{th:triang}.


\medskip
 (E) shows that  $\Gam_U=\emp$ provided  $U$ is small enough. 
Therefore we can assume:

\begin{description}

\item[(F)] {\em $\dim \Z\B \cap K =1$ for all  $\B\in   G_d (\msf N_U)$.}
\end{description}

  The set
\[
 Q:=\big\{(\B, p)\in G_d (\msf N_U)\times M\co p\in \Z\B\cap K\big\}.
\] 
is analytic in $G_d (\msf N_U)\times M$ (Proposition \ref{th:zs}).  
 The natural projections 
\[ \pi_1\co Q\to G_d (\msf N_U), \quad \pi_2
 \co Q\to K
\] 
are analytic,  $\pi_1$ is surjective, 
 $\pi_2$ is injective by (D). 

The sets $\Z\B\cap K$ are therefore pairwise disjoint, and each
is a 1-dimensional analytic
subspaces of $Q$ by (F).  Therefore
\[
\dim Q  = \dim G_d (\A) + \dim (\Z \B\cap K)  \le \dim K,
\]
whence
\[
\dim Q  =  d + 1 \le 2.
\]
But this is impossible because $d \ge 2$ by the Induction
Hypothesis.  This completes the inductive proof of the Main Theorem in Case I.
\paragraph{Case II:}\  {\em $\dim \A $ \, is arbitrary. } 

Consider the  family $\mathfrak F$ of  compact subsets of $K$:
\[
\mathfrak F:= \big\{\Z {\A'}\cap K\co \A'\subset \A \ \text
{is a finite-dimensional subalgebra}\big\}.
\]
Evidently 
\[\bigcap_{S\in\mathfrak F}S\,=\, \Z \A \cap K.
\]
Case  I shows every finite subset of  $\mathfrak F$ has nonempty
intersection.  As $K$ is compact, all the elements of $\mathfrak F$
have nonmpty intersection, proving  $\Z\A \cap K\ne\varnothing$. \qed


\end{document}